\theoremstyle{definition}
\newtheorem{thm}{Theorem}[section]
\newtheorem{prop}[thm]{Proposition}
\newtheorem{lem}[thm]{Lemma}
\newtheorem{rem}[thm]{Remark}
\begin{document}

\date{}
\author{Leonardo Dinamarca Opazo \quad \& \quad Andr\'es Navas}

\title{Exact quadratic growth for the derivatives of iterates of \\
interval diffeomorphisms with only parabolic fixed points}
\maketitle

\noindent{\bf Abstract.} We consider $C^2$ diffeomorphisms of a closed interval with only parabolic fixed points. 
We show that the maximal growth of the derivatives of the iterates of such a diffeomorphism is exactly quadratic 
provided it has a non-quadratical tangency to the identity at a fixed point that is topologically repelling on one 
side. Moreover, in absence of such fixed points, the maximal growth of the derivatives of the iterates is subquadratic. 

\vspace{0.2cm}

\noindent{\bf Keywords:} Interval, diffeomorphism, normal form, derivative growth, distortion.

\vspace{0.2cm}

\noindent{\bf MCS 2020:} 
37C05 % Smooth mappings and diffeomorphisms 
37C10, % Vector fields, flows, ordinary differential equations
37C15, % Topological and differentiable equivalence, conjugacy, invariants, moduli, classification
%37C85, % Dynamics of group actions other than Z and R, and foliations
37E05. % Maps of the interval (piecewise continuous, continuous, smooth)
%57M60. % Group actions in low dimensions

\vspace{0.5cm}

In the beautiful work \cite{PS04}, Polterovich and Sodin prove the following somewhat surprising 
result: for every $C^2$ diffeomorphim of a closed interval having only parabolic fixed points, 
the growth of the derivatives of the iterates is at most quadratic. More precisely, if we let \,
$\Gamma^n (f) := \sup_x \max \{ Df^n (x), Df^{-n}(x) \},$ \, 
then 
$$\limsup_{n \to \infty} \frac{\Gamma^n (f)}{n^2} < + \infty.$$ 
In this article we prove that the quotient above actually converges. To do this, we obtain 
finer information for positive iterates. We hence let $\Gamma^n_+ (f) := \sup_x Df^n (x)$, and 
we study the asymptotic of this sequence.  

In all what follows, we will implicitly assume that the map $f$ we are dealing with is orientation preserving and nontrivial.
We call a {\em component of $f$} any subinterval fixed by $f$ that contains no fixed point in its interior. 
We let $\mathcal{C}(f)$ be the family of components of $f$. Note that, for each $I \in \mathcal{C}(f)$, 
one (and exactly one) of the endpoints is topologically repelling for $f$. We denote it by $r_I$. 

Recall that, by the work of Szekeres \cite{Sz58} 
and Kopell \cite{Ko68}, for every $C^2$ diffeomorphism of a half-closed interval with 
no fixed point inside, there exists a unique $C^1$ vector field whose time-1 map coincides with the diffeomorphism.  
This vector field continuously extends to the closure of the interval, yet the extension is not necessarily smooth 
(see \cite[Chapter IV]{Yo95}). 
If $I$ is a component of $f$, we denote by $X_{I}$ the unique $C^1$ vector field defined on the half-closed interval 
$I^+ := I^{\mathrm{o}} \cup \{r_I\}$ whose time-1 map coincides with the corresponding restriction of $f$.

\vspace{0.3cm}

\noindent{\bf Main Theorem.} 
{\em For every $C^2$ diffeomorphism $f$ of a closed interval having only parabolic fixed points, 
the expression $\Gamma^n_+ (f) / n^2$ converges to a finite limit as $n$ goes to infinity, and 
\begin{equation}\label{eq:limite}
\lim_{n \to \infty} \frac{\Gamma^n_+ (f)}{n^2} = 
\sup_{I \in \mathcal{C} (f)} \left[ \frac{1}{2} D^2 f (r_I) \cdot  \max_{y \in I} |X_{I} (y)| \right].
\end{equation}
}

\vspace{0.3cm}

Since $\Gamma^n (f) = \max \{ \Gamma^n_+ (f), \Gamma_+^n (f^{-1})\}$, the result above implies 
Polterovich-Sodin's theorem (by an application to both $f$ and $f^{-1}$ simultaneously).  
Also, it implies that, if $f$ has a quadratic tangency to the identity at each of its fixed points, 
then $\Gamma^n (f) / n^2$ converges to zero. Let us point out, however, that this was first established by Watanabe 
in \cite{Wa04} via a clever modification of Polterovich-Sodin's arguments. Nevertheless, our proof in this case 
(which is essentially that of Watanabe up to a slight modification in the end) gives extra information. For instance, 
for $f$ with no fixed point in the interior, we prove the vanishing of the limit of  $\Gamma^n_+ (f) / n^2$ if the tangency to the 
identity is quadratic at the topologically repelling fixed point, but not necessarily at the other one. This means that topologically 
contracting fixed points can only yield subquadratic growth of the derivative along iterates. Actually, if such a point is not flat, 
then it yields uniform decay of the derivatives along iterates. Rather surprisingly, this is not the case of flat fixed points. 
We will illustrate this phenomenon with an explicit example in Section \ref{s:ex}.

The new key ingredient to establish the exact quadratic rate growth in the Main Theorem are the Szekeres 
vector fields. It is worth mentioning that this was somehow suggested in \cite{PS04}, where it is explicitly 
proposed to use normal forms around fixed points in the analysis. Of course, this is unavailable for infinitely flat 
fixed points, but fortunately, subquadratic growth in this framework has been already established by Borichev in class $C^{\infty}$ 
\cite{Bo04} and by Watanabe in class $C^2$ \cite{Wa04}. In the non flat case, however, there are still technical problems. 
In particular, normal forms are available only in high differentiability (see \cite{Ta73} for a classical result in class $C^{\infty}$ 
and \cite{EN24} for recent sharp results in finite regularity). Moreover, the use of normal forms allows establishing the order 
of growth of the derivatives along iterates, but does not detect its precise asymptotic. 

The crucial observation is that dealing with the exact growth of derivatives along iterates of an interval diffeomorphism $f$ 
naturally leads to consider the 1-parameter groups of $C^1$ diffeomorphisms that realize $f$ as its time-1 map. When normal 
forms are available, these flows are easily detected. However, these exist even for flat fixed points. In particular, if $f$ has no 
fixed point in the interior, classical work of Szekeres and Kopell shows that there are (at most) two such flows, each of them 
arising by looking $f$ as a diffeomorphism of a half-closed interval (these flows may happen to be different). In our discussion, 
only one of them becomes relevant, namely the one that is associated to the topologically repelling fixed point. This  flow is 
encoded by a $C^1$ (generating) vector field $X$ which hence satisfies $X \circ f = X \cdot Df$. This relation obviously implies 
$Df^n = X \circ f^n /X.$
Studying the growth of $Df^n$ then reduces to study the decay of $X$ near the topologically repelling fixed point as well as the 
behavior of the orbits (of the inverse map $f^{-1}$) around it. Fortunately, the precise asymptotic in the case we need to settle 
is already available, so after some preparation work all fits nicely, and the exact formula (\ref{eq:limite}) arises. Most of this 
is carried out in full detail in Section~\ref{s:new}. Quite remarkably, none of the arguments require more regularity than $C^2$.

%%%%%%%%%%%%%%%%%%%%%%%%%%%%%%%%%%%%%%%%%%%%%%%%%%%%%%%%%%%%%%%%%%%%%%%

\section{On Polterovich-Sodin's growth lemma}

A great deal of the analysis done in \cite{PS04} is based on the following crucial lemma. Although 
it is less relevant for our approach, we will still use it at a key step; see Remark \ref{r:todavia} on this.

\vspace{0.35cm}

\noindent{\bf Lemma [on quasiconvex sequences]}  
{\em Let $(a_n)_{n \geq 0}$ be a sequence of real numbers with $a_0 = 0$ and such that, for all $n \geq 1$, 
the inequality 
\begin{equation}\label{eq:quasi}
2 a_n - a_{n-1} - a_{n+1} \leq C e^{-a_n}
\end{equation}
holds for a certain (fixed) $C \geq 0$. Assume that $\liminf_{n \to \infty} \frac{a_n}{n} = 0$. 
Then, for all $n \geq 1$, 
$$e^{a_n} \leq \left( n \, \sqrt{\frac{C}{2}} + 1 \right)^2.$$
}

\vspace{0.2cm}

Following Polterovich and Sodin, below we establish a general quantitative lemma that incorporates some of Watanabe's remarks 
\cite{Wa04}. Although our analysis mostly relies on the generating vector fields, this will simplify the exposition at several parts. 

\vspace{0.1cm}

\begin{lem} \label{lem:general-entero}
Let $f$ be a $C^2$ diffeomorphism of a closed interval $L$ that has no fixed point in the interior. 
If both endpoints are parabolic then, for all $n \geq 1$, it holds
$$\Gamma^n_+ (f) \leq \left( n \, \sqrt{\frac{C}{2}} + 1 \right)^2,$$
where $C := C' e^{C'}$ , with $C' := |L| \cdot 
\left\| \frac{D^2f}{Df} \right\|_{\infty} .$
\end{lem}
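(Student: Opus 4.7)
The plan is to apply the Polterovich--Sodin quasiconvex sequences lemma above to the sequence $a_n := \log \Gamma^n_+(f)$, $n \geq 0$. One has $a_0 = 0$ automatically, and the conclusion $e^{a_n} \leq (n\sqrt{C/2}+1)^2$ is precisely the bound we want. So I would verify (i) $\liminf_{n\to\infty} a_n/n = 0$ and (ii) the quasiconvex inequality $2a_n - a_{n-1} - a_{n+1} \leq C e^{-a_n}$ with the prescribed $C = C'e^{C'}$.

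For (i), since $f$ has no interior fixed point and both endpoints are parabolic, $f$ is strictly monotonic on the interior (say $f > \mathrm{id}$; otherwise work with $f^{-1}$) and every interior orbit converges monotonically to the attracting endpoint, where $Df = 1$. Given $\epsilon > 0$, there exists a neighborhood of the two fixed points on which $|\log Df| < \epsilon$; outside it, $g := f - \mathrm{id}$ is bounded below by a positive constant, so any monotone orbit crosses it in a uniformly bounded number of iterates. Combining these two facts gives $\log Df^n(x) \leq n\epsilon + C_\epsilon$ uniformly in $x$, and taking supremum in $x$ and then limsup in $n$ yields $a_n/n \to 0$.

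For (ii), for each $n$ I would pick $x_n^* \in L$ attaining $\Gamma^n_+(f)$ and set $y_k^* := f^k(x_n^*)$, $I_{n-1}^* := [y_{n-1}^*, y_n^*]$. Since $x_n^*$ is a valid test point in the definitions of $\Gamma^{n\pm 1}_+$, one deduces $a_{n-1} \geq a_n - \log Df(y_{n-1}^*)$ and $a_{n+1} \geq a_n + \log Df(y_n^*)$, hence
\[ 2a_n - a_{n-1} - a_{n+1} \leq \log Df(y_{n-1}^*) - \log Df(y_n^*) \leq K\,|I_{n-1}^*|, \]
with $K := \|D^2 f/Df\|_\infty$ and the final step the mean value theorem applied to $\log Df$. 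The quasiconvex inequality with the stated $C$ therefore reduces to the product estimate $|I_{n-1}^*|\cdot Df^n(x_n^*) \leq |L|\, e^{C'}$.

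This last estimate is where I expect the main obstacle to lie. The natural toolbox is bounded distortion together with the maximality at $x_n^*$: the orbit intervals $I_k^* = [y_k^*, y_{k+1}^*]$ are pairwise disjoint in $L$, so $\sum_k |I_k^*| \leq |L|$, and hence the distortion of $f^n$ along the orbit through $x_n^*$ is at most $e^{K|L|} = e^{C'}$. Combining this with the max condition $Df^n(\xi) \leq Df^n(x_n^*)$ for every $\xi$ (and $Df^n(\xi) \geq e^{-C'} Df^n(x_n^*)$ on $I_{-1}^* := [f^{-1}(x_n^*), x_n^*]$ by distortion), and with the summed bound $|I_{-1}^*|\cdot Df^n(x_n^*) \leq e^{C'}|L|$ derived from $|I_k^*| \geq e^{-C'} Df^{k+1}(x_n^*)|I_{-1}^*|$ and the partition estimate, one should be able to piece together the desired product bound. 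The subtle point---following the spirit of Watanabe's modification of the original Polterovich--Sodin argument---is balancing the maximality hypothesis against the distortion so as to land on exactly $|L|\,e^{C'}$ without losing a spare factor of $Df^n(x_n^*)$ through a naive chain of inequal
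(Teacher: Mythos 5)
Your frame (apply the quasiconvexity lemma to $a_n=\log\Gamma^n_+(f)$, check sublinearity, and extract the quasiconvexity from a maximizer $x_n^*$) is the right one, and your sublinearity argument for part (i) is fine. The gap is in part (ii), and it is not just a technical obstacle: the product estimate you reduce to, $|I_{n-1}^*|\cdot Df^n(x_n^*)\leq |L|\,e^{C'}$, is \emph{false} in general, so no amount of distortion bookkeeping will close it. Indeed, in the generic situation treated in Section \ref{s:new} of the paper (repelling endpoint $a$ with $D^2f(a)>0$), one has $\Gamma^n_+(f)\asymp n^2$ while the image points $y_n^*=f^n(x_n^*)$ stay in a fixed compact subinterval $[A,B]$ of the interior (Lemma \ref{lem:dos-partes}); hence $|I_{n-1}^*|=y_n^*-f^{-1}(y_n^*)$ is bounded \emph{below} by a positive constant and the product grows like $n^2$. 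Even your intermediate bound $2a_n-a_{n-1}-a_{n+1}\leq \log Df(y^*_{n-1})-\log Df(y^*_n)$ is too lossy: for the time-one map of a nonnegative vector field with a strict interior maximum at $y^+$, one has $y_n^*\to y^+$ and this difference converges to the positive constant $2\log X(y^+)-\log X(f^{-1}(y^+))-\log X(f(y^+))$, so it can never be $\leq Ce^{-a_n}\to 0$. The culprit is your choice of test points: using $x_n^*$ itself for both $f^{n-1}$ and $f^{n+1}$ makes the boundary terms appear at the \emph{end} of the orbit, on the last orbit interval, which does not shrink.

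The paper's proof differs exactly here: it tests $f^{n-1}$ at $f(x_n^*)$ and $f^{n+1}$ at $f^{-1}(x_n^*)$, which localizes the difference at the \emph{start} of the orbit, giving $2a_n-a_{n-1}-a_{n+1}\leq \log Df(x_n^*)-\log Df(f^{-1}(x_n^*))\leq K\,[x_n^*-f^{-1}(x_n^*)]$ with $K=\left\|\frac{D^2f}{Df}\right\|_\infty$. The pre-interval $[f^{-1}(x_n^*),x_n^*]$ \emph{is} tiny: by the mean value theorem for $f^n$ on it there is $x_n'$ with $Df^n(x_n')=\frac{|I^*_{n-1}|}{x_n^*-f^{-1}(x_n^*)}$, and Denjoy's distortion estimate $Df^n(x_n^*)/Df^n(x_n')\leq e^{V(f)}\leq e^{C'}$ yields $x_n^*-f^{-1}(x_n^*)\leq e^{C'}\,|I^*_{n-1}|\,e^{-a_n}\leq e^{C'}|L|\,e^{-a_n}$, i.e.\ the correct estimate is ``pre-interval times maximal derivative $\leq e^{C'}|L|$'', not the post-interval version you aim at. With that single change of test points, the rest of your outline (mean value theorem for $\log Df$, disjointness of orbit intervals giving $V(f)\leq C'$, then the quasiconvex sequences lemma) goes through and is exactly the paper's argument.
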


\begin{proof} Assume that $f$ pushes all points to the right, otherwise just conjugate it by the reflexion of $L$. 
For simplicity, denote $a_n := \log \Gamma_+^n (f)$. Let $x_n$ be a point that realizes $\Gamma^n_+(f)$, that is, 
$Df^n (x_n) = \Gamma_+^n (f)$. Then, as in \cite{PS04}, we have 
$$a_n = \log Df^n (x_n) = \sum_{i=0}^{n-1} \log Df (f^i (x_n)),$$
$$a_{n-1} \geq \log Df^{n-1} (f(x_n)) = \sum_{i=1}^{n-1} \log Df (f^i (x_n)),$$
$$a_{n+1} \geq \log Df^{n+1} (f^{-1}(x_n)) = \sum_{i=-1}^{n-1} \log Df (f^i (x_n)).$$
Therefore,
$$2a_n - a_{n-1}-a_{n+1} \leq \log Df (x_n) - \log Df (f^{-1}(x_n)).$$
Since $f$ is a $C^2$ diffeomorphism, 
there exists a point $\xi_n \in [f^{-1}(x_n),x_n]$ such that 
$$\log Df (x_n) - \log Df (f^{-1}(x_n)) = D \log Df (\xi_n) \cdot [x_n - f^{-1} (x_n)] =  \frac{D^2f}{Df} (\xi_n) \cdot [x_n - f^{-1} (x_n)],$$
hence 
$$2a_n - a_{n-1}-a_{n+1} 
\leq  \frac{D^2f}{Df} (\xi_n) \cdot [x_n - f^{-1} (x_n)] 
= \frac{D^2f}{Df} (\xi_n) \cdot \frac{[x_n - f^{-1} (x_n)] }{[f^n (x_n) - f^{n-1} (x_n)]} \cdot [f^n (x_n) - f^{n-1}(x_n)].$$
Also, there is $x_n' \in [f^{-1}(x_n),x_n]$ such that 
$$Df^n (x_n' ) =\frac {[f^n (x_n) - f^{n-1} (x_n)]}{[x_n - f^{-1} (x_n)] } ,$$  
hence 
$$2a_n - a_{n-1}-a_{n+1} 
\leq  \frac{D^2 f}{D f} (\xi_n) \cdot \frac{ [f^n (x_n) - f^{n-1}(x_n)] }{Df^n (x_n')}.$$
Finally, the classical Denjoy's control-of-distortion argument gives 
$$\frac{Df^{n} (x_n)}{Df^n (x_n')} \leq e^{V(f)},$$
where $V(f) := \mathrm{var} (\log Df)$. (See \cite{Na11} for a full treatment of this argument.) Therefore, 
\begin{equation}\label{eq:main}
2a_n - a_{n-1}-a_{n+1} 
\leq e^{V(f)} \cdot \frac{D^2 f}{Df} (\xi_n) \cdot [f^n (x_n) - f^{n-1}(x_n)] \cdot e^{-a_n}.
\end{equation}
Since $| f^n (x_n) - f^{n-1}(x_n) | \leq | L  |$ and 
\begin{equation}\label{eq:cprima}
V(f) = \int_L \left| \frac{D^2 f}{D f} \right| \leq C',
\end{equation}
this gives the announced quasiconvexity of $(a_n)$. Since the endpoints are parabolic, 
one has 
\begin{equation}\label{eq:sublinear}
\lim_{n \to \infty} \frac{a_n}{n} = 0
\end{equation}
(this is an elementary exercise using the continuity of the derivative; 
see also \cite[(1.2)]{PS04} for a more sophisticated -though clarifying- proof). Polterovich-Sodin's lemma on quasiconvex 
sequences then allows concluding the proof.
\end{proof}

%%%%%%%%%%%%%%%%%%%%%%%%%%%%%%%%%%%%%%%%%%%%%%%%%%%%%%%%%%%%%%%%%%%%%%

\section{A proof of the Main Theorem: no interior fixed point}

We start by establishing the Main Theorem for diffeomorphisms of a closed interval $L := [a,b]$ with no interior fixed point. 
With no loss of generality, we can suppose that $f$ pushes all interior points to the right, 
otherwise we can conjugate it by the reflexion of $L$.

%%%%%%%%%%%%%%%%%%%%%%%%%%%%%%%%%%%%%

\subsection{The case of a quadratic tangency at the repelling fixed point}
\label{s:problem}

Assuming that $D^2f(a) = 0$, our goal is to prove that 
\begin{equation}\label{eq:aprobar0}
\lim_{n \to \infty} \frac{\Gamma^n_+(f)}{n^2} = 0.
\end{equation}
To do this, we closely follow Watanabe's arguments from \cite{Wa04}, with an important modification in the end. 

Fix $\varepsilon > 0$. For each positive $\mu < 1/|L|$, let 
$$\delta := \max \left\{ \frac{\mu \, |L|}{1-\mu\,|L|}, \, \mu \cdot \left\| \frac{D^2 f }{D f } \right\|_{\infty} \right\}.$$
Also define 
$$C' := |L| \cdot \left\| \frac{D^2 f}{D f}  \right\|_{\infty}  .$$
Fix $\mu >0$ small enough such that $\delta e^{C'} \leq 2 \varepsilon$. Let $\mu' > 0$ be such that 
\begin{equation}\label{eq:dondedonde}
\left| D^2 f (\xi)\right| \leq \mu \quad \mbox{for all } \xi \in [a,a+\mu'],
\end{equation}
and let $N$ be large enough so that $f^{N-1} (a+\mu') > b - \mu$. We claim that the sequence 
$$b_n:= \log \left( \frac{\Gamma_+^{n+N} (f)}{\Gamma^{N}_+(f)} \right) = a_{n+N} - a_{N}, $$ 
satisfies the quasiconvexity inequality 
\begin{equation}\label{eq:quasi-desplazada}
2 b_n - b_{n-1} - b_{n+1} \leq 2 \varepsilon e^{-a_{N}} e^{-b_n}.
\end{equation}
Before checking this note that, since $b_0=0$, by (\ref{eq:sublinear}) and 
the lemma on quasiconvex sequences this implies that, for all $n \geq 1$,  
$$\Gamma^{n+N}_+ (f) 
=e^{ a_{n+N}} 
= e^{a_N} e^{b_n} 
\leq e^{a_N} (n \sqrt{\varepsilon e^{-a_{N}}} + 1)^2 
= (n \sqrt{\varepsilon} + \sqrt{e^{a_N}})^2.$$
This obviously gives
$$\limsup_{n \to \infty} \frac{\Gamma^n_+(f)}{n^2} \leq \varepsilon,$$
and since this is true for every $\varepsilon > 0$, this implies (\ref{eq:aprobar0}).  \\

Now, to prove (\ref{eq:quasi-desplazada}), we refer to (\ref{eq:main}). There are two cases to consider:
\begin{itemize}
\item If $x_{n+N} \leq a+\mu'$ then $\xi_{n+N} \leq a+\mu'$, which in virtue of (\ref{eq:dondedonde}) gives $|D^2 f (\xi_{n+N})| \leq \mu$. 
Since $Df(a)=1$, this implies that $Df(\xi_{n+N}) \geq 1-\mu\,| L |$. By (\ref{eq:main}) and (\ref{eq:cprima}), this gives  
$$2a_{n+N} - a_{n+N-1} - a_{n+N+1} 
\leq e^{C'} \frac{\mu \,  |L|}{1-\mu\, |L|} e^{-a_{n+N}} 
\leq \delta e^{C'} e^{-a_{n+N}} 
\leq 2\varepsilon e^{-a_{n+N}},$$
which immediately yields (\ref{eq:quasi-desplazada}).
\item If $x_{n+N} \geq a+\mu'$ then, by the choice of $N$, we have $f^{N-1} (x_{n+N}) \geq b - \mu$. 
This implies that $f^{n+N}(x_{n+N}) - f^{n+N-1}(x_{n+N}) < \mu$ which, by (\ref{eq:main}) and (\ref{eq:cprima}), gives  
$$2a_{n+N} - a_{n+N-1} - a_{n+N+1} 
\leq e^{C'} \left\| \frac{D^2f}{Df} \right\|_{\infty} \mu \, e^{-a_{n+N}} 
\leq \delta e^{C'} e^{-a_{n+N}} 
\leq 2\varepsilon e^{-a_{n+N}},$$
which again shows (\ref{eq:quasi-desplazada}).
\end{itemize}

%%%%%%%%%%%%%%%%%%%%%%%%%%%%%%%%%%%%%%

\subsection{The case of a nonquadratic tangency at the repelling fixed point}
\label{s:new}

Recall that we are supposing that our $C^2$ diffeomorphism $f$ of the closed interval $L := [a,b]$ 
pushes all interior points to the right (which amounts to saying that $X$ is strictly positive 
on the interior $L^{\mathrm{o}}$ of the interval $L$). 
Here, we also assume that $f$ is not quadratically  tangent to the identity at $a$, which implies that 
$D^2 f (a) > 0$. Under these assumptions, our goal is to prove that 
\begin{equation}\label{eq:limit-spf}
\lim_{n \to \infty} \frac{\Gamma^n_+(f)}{n^2} 
= 
\frac{1}{2} D^2 f (a) \cdot \max_{y \in L} X_{L} (y).
\end{equation}
To do this, we first state a lemma that allows localizing the points $x_n$ that realize the maximal  
derivative for $f^n$. By reasons that will become clear later, we will first establish only a half of it. 
The (much harder) second half will be proved after some further developments.

\begin{lem} \label{lem:dos-partes}
There exists a compact subinterval $[A,B] \subset (a,b)$ such that if $(x_n)$ is any sequence satisfying 
$Df^n (x_n) = \Gamma_+^n (f)$, then $f^n (x_n)$ belongs to $[A,B]$.
\end{lem}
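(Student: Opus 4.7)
The plan is to exploit the Szekeres vector field $X = X_L$ on $[a,b)$ and the relation $Df^n(x) = X(f^n(x))/X(x)$. For this half of the lemma I will only need the local behavior of $X$ near $a$, namely the classical asymptotic $X(w) = \frac{1}{2} D^2 f(a)\,(w-a)^2\,(1+o(1))$ as $w \to a^+$, together with the flow-time identity $T(y_n) - T(x_n) = n$ for $T(w) := \int_{y_0}^w ds/X(s)$ with a fixed basepoint $y_0 \in (a,b)$. No information about $X$ away from $a$ beyond its positivity on $(a,b)$ will enter the argument.

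The first step is a quadratic lower bound for $\Gamma^n_+(f)$. Fix any $y^* \in (a,b)$ and set $x_n^* := f^{-n}(y^*)$; then $x_n^* \to a$, and the flow identity combined with the near-$a$ asymptotic of $X$ yields
\[
\frac{1}{x_n^*-a} = \frac{1}{y^*-a} + \frac{D^2 f(a)}{2}\,n\,(1+o(1)),
\]
so $x_n^* - a \sim 2/(D^2 f(a)\,n)$ and $X(x_n^*) \sim 2/(D^2 f(a)\,n^2)$. It follows that $Df^n(x_n^*) = X(y^*)/X(x_n^*) \sim \frac{1}{2} D^2 f(a)\,X(y^*)\,n^2$, and letting $y^*$ vary produces $\liminf_n \Gamma^n_+(f)/n^2 \geq \frac{1}{2}D^2 f(a)\,\sup_{y\in L} X_L(y) > 0$.

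The easy half of the localization is to show that $y_n := f^n(x_n)$ cannot accumulate at $a$. Suppose to the contrary that $y_n \to a$ along a subsequence. Because $f$ pushes points to the right, $x_n \leq y_n$, and thus $x_n \to a$ as well; both orbit points now sit in the asymptotic regime near $a$. Writing $\eta_n := y_n - a$ and applying the flow identity in flow-time $n$ gives $(y_n-a)/(x_n-a) = 1 + \frac{1}{2}D^2 f(a)\,n\,\eta_n\,(1+o(1))$, whence
\[
Df^n(x_n) = \frac{X(y_n)}{X(x_n)} = \left(1+\frac{1}{2}D^2 f(a)\,n\,\eta_n\right)^2 (1+o(1)).
\]
A three-way split on the size of $n\eta_n$ (bounded; convergent to a finite positive value; or divergent to infinity) then forces $Df^n(x_n)/n^2 \to 0$ in every scenario: in the divergent case the leading contribution is $(D^2 f(a)/2)^2 \eta_n^2$, which vanishes because $\eta_n \to 0$. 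This contradicts the quadratic lower bound just proved.

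The remaining half, that $y_n$ stays bounded away from $b$, will be the main obstacle, and I do not see how to settle it purely from the local asymptotics of $X$ near $a$. The difficulty is that Szekeres's theorem produces $X$ only on $[a,b)$ and does not control its rate of vanishing at $b$: because of the Kopell phenomenon, $X$ need not extend regularly across $b$ and may well be flat there even when $D^2 f(b) > 0$. I therefore defer this second half to the subsequent development.
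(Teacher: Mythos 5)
Your treatment of the half you do address is correct: the flow-time identity plus the asymptotic $X(w)\sim \frac{1}{2}D^2f(a)(w-a)^2$ does give the quadratic lower bound (this is in substance the paper's own lower-bound computation) and does exclude $y_n:=f^n(x_n)$ accumulating at $a$. But it is much heavier machinery than needed for that half: since $D^2f(a)>0$, the derivative $Df$ is strictly increasing on some $[a,A]$, and if $y_n<A$ then the whole segment $[x_n,y_n]$ lies in $[a,A]$, so $Df^n(f(x_n))=Df^n(x_n)\cdot Df(y_n)/Df(x_n)>Df^n(x_n)$, contradicting maximality — no vector field is required. (A small point you skip: for a fixed $n$ the maximizers' images are automatically bounded away from the endpoints, since $\Gamma^n_+(f)>1$ while $Df^n\to 1$ at $a$ and $b$; your contradiction only covers subsequences with $n\to\infty$.)

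The genuine gap is the half you defer: the existence of $B<b$. This is the real content of the lemma and cannot be postponed, because the concluding asymptotic argument for (\ref{eq:limit-spf}) needs $y_n$ confined to a compact subinterval of $(a,b)$ on which the convergence of Lemma \ref{lem:orbitas} is uniform, so that $X(x_n)\sim 2/(n^2D^2f(a))$ can be applied to the actual maximizers. Your diagnosis of the obstruction is accurate — $X$ lives only on $[a,b)$, may be flat at $b$, and $b$ itself may be infinitely flat — but the conclusion is not that the statement must wait; it is that a second mechanism is required. The paper uses two: (i) the \emph{other} Szekeres field $Y$ on $(a,b]$, together with the quadratic lower bound $\Gamma^n_+(f)\ge 2\varepsilon n^2$ (which you have) and the Fatou asymptotics for $x_n-a$, to show that $x_n\le A$ and $y_n\ge B'$ cannot hold simultaneously: one factors $Df^n(x_n)$ through a fixed fundamental domain $[A',f(A'))$ as an $X$-quotient times a $Y$-quotient and chooses $B'$ so that $Y$ is tiny on $[B',b]$; and (ii) a shifted quasiconvex-sequence argument in the spirit of Lemma \ref{lem:general-entero} to show that $x_n\ge A$ can occur only finitely often, since derivatives of iterates at points of $[A,b]$ grow subquadratically. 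Step (ii) is done with the Polterovich--Sodin lemma precisely because $Y$ gives no usable control when $b$ is flat (see Remark \ref{r:todavia}), and the example of Section \ref{s:ex} shows that a flat topologically contracting endpoint can still produce nearly quadratic derivative growth — so ``$b$ is contracting'' alone will never yield this half. Without a substitute for these two claims, the lemma, and with it the limit formula, remains unproved.
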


\noindent{\em Proof of the first half of Lemma \ref{lem:dos-partes} (existence of $A > a$).} 
Since $D^2 f(a) > 0$, the derivative $Df$ is strictly increasing on a certain interval $[a,A]$. 
We claim that, for all $n \geq 1$, one necessarily has $f^n (x_n) \geq A$. Assume otherwise. 
Then $[x_n,f^n(x_n)]$ is contained in $[a,A]$, where $Df$ is increasing. This implies that 
$Df (f^n(x_n)) > Df(x_n)$, hence
$$Df^n (f(x_n)) = Df^n (x_n) \cdot \frac{Df (f^n(x_n))}{Df(x_n)} > Df^n (x_n).$$
However, this contradicts the fact that $x_n$ realizes the maximum of the derivative of $f^n$. 
$\hfill\square$

\vspace{0.4cm}

Before passing to the proof of the second half of the lemma, we first need to control the orbits of the inverse map $f^{-1}$. 
This is the content of the next folklore result (which seems to go back to Fatou), for which we include a sketch of proof since 
most (all?) of the proofs in the literature use higher regularity (see for instance \cite{Re13} and  \cite[Proposition 3.2]{EN24} 
for finer results in class $C^3$). 

\medskip

\begin{lem} \label{lem:orbitas}
For every point $y \in L^{\mathrm{o}}$, one has $\lim_{n \to \infty} n \, (f^{-n} (y) - a) = \frac{2}{ D^2 f (a)}$. 
Moreover, the convergence is uniform on compact subintervals of $L^{\mathrm{o}}$.
\end{lem}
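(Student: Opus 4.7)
The plan is to reduce the statement to a standard asymptotic analysis near a parabolic fixed point. Fix $y \in L^{\mathrm{o}}$ and set $y_n := f^{-n}(y) - a$. Since $f$ has no fixed point in $(a,b)$ and pushes all interior points to the right, $a$ is topologically repelling for $f$, so $y_n$ decreases monotonically to $0$. Because $f$ is $C^2$ with $Df(a) = 1$, Taylor's theorem (Peano form, using only continuity of $D^2 f$) gives
$$
f(a + t) = a + t + \tfrac{1}{2} D^2 f(a)\, t^2 + t^2 \, \varepsilon(t), \qquad \varepsilon(t) \to 0 \text{ as } t \to 0^+.
$$

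Applying this with $t = y_{n+1}$ and using the identity $f(a + y_{n+1}) = a + y_n$, one obtains the recursion
$$
y_n - y_{n+1} = \tfrac{1}{2} D^2 f(a) \, y_{n+1}^2 + y_{n+1}^2 \, \varepsilon(y_{n+1}).
$$
In particular $y_n / y_{n+1} \to 1$, and therefore dividing by $y_n y_{n+1}$ gives
$$
\frac{1}{y_{n+1}} - \frac{1}{y_n} = \frac{y_n - y_{n+1}}{y_n y_{n+1}} \;\xrightarrow[n \to \infty]{}\; \tfrac{1}{2} D^2 f(a).
$$
The Stolz--Cesàro theorem then yields $\frac{1}{n y_n} \to \tfrac{1}{2} D^2 f(a)$, which is precisely the desired pointwise asymptotic $n \, (f^{-n}(y) - a) \to 2 / D^2 f(a)$.

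To upgrade to uniform convergence on a compact $[c,d] \subset L^{\mathrm{o}}$, I exploit the fact that $f^{-n}$ is order preserving: for every $y \in [c,d]$,
$$
n \bigl( f^{-n}(c) - a \bigr) \;\leq\; n \bigl( f^{-n}(y) - a \bigr) \;\leq\; n \bigl( f^{-n}(d) - a \bigr),
$$
and both extreme sequences converge to the same constant $2 / D^2 f(a)$ by the pointwise statement, so a sandwich argument delivers uniform convergence. The main delicate point is to rigorously control the error term in the Taylor expansion with $D^2 f$ merely continuous (no Hölder control is available); however, Peano's form of Taylor is already enough, and the rest of the argument is a clean Stolz--Cesàro calculation together with the monotonicity-based sandwich.
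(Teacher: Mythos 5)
Your proof is correct and is essentially the paper's argument: working with $1/y_n$ and showing its increments converge to $\tfrac{1}{2} D^2 f(a)$ is exactly the paper's conjugation by the inversion $I(z) = 1/z$, with your Stolz--Ces\`aro step playing the role of the paper's estimate $G^n(z) = z + ns + o(n)$. Your monotonicity sandwich is a clean way of supplying the uniform-convergence detail that the paper leaves to the reader.
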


\begin{proof} Changing $f$ by its inverse and conjugating by a translation, we are reduced to show the following: for a $C^2$ diffeomorphism 
$g$ of an interval $[0,c]$ with no interior fixed point and which is of the form $g (x) = x - s x^2 + o(x^2)$ around the origin (with $s > 0$), 
one has 
$$\lim_{n \to \infty } n \, g^n (y) = \frac{1}{s}$$
for each  $y \in (0,c)$, and the limit is uniform on compact subintervals. To prove this first note that, since all compact subintervals 
eventually enter into any prescribed neighborhood of the origin, we may argue in such a neighborhood. Now consider the conjugate 
map $G := I \circ g \circ I$, where $I$ denotes the inversion $I(z):= 1/z$. We are thus reduced to show that $G^n (z) / sn$ converges 
to 1 as $n$ goes to infinite, and that this convergence is uniform on compact intervals. To do this, we compute:
$$G(z) = \frac{1}{g(\frac{1}{z})} = \frac{1}{\frac{1}{z} - \frac{s}{z^2} + o \big(\frac{1}{z^2} \big)} 
= \frac{z}{1 - \frac{s}{z} + o \big(\frac{1}{z} \big)} = z + s + o(1).$$
Using this, one easily convinces that $G^n (z) = z + ns + o(n)$, which allows establishing the announced limit. 
Moreover, the convergence is easily seen to be uniform on compact subsets. We leave the details to the reader. 
\end{proof}

The preceding lemma ensures that the growth of $\Gamma^n_+(f)$ is bounded from below by a quadratic sequence, 
as we next state.

\vspace{0.1cm}

\begin{lem} One has 
$$\liminf_{n \to \infty} \frac{\Gamma^n_+(f)}{n^2} \geq \frac{[A-f^{-1}(A)] \cdot D^2f (a)}{2}.$$
\end{lem}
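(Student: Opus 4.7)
The strategy is to exhibit an explicit sequence of test points $(z_n)$ along which $Df^n(z_n)$ already grows at the claimed quadratic rate; since $\Gamma^n_+(f) \geq Df^n(z_n)$, this will suffice. Set $p_n := f^{-n}(A)$, a decreasing sequence in $L^{\mathrm{o}}$ converging to $a$, and note that $p_{n+1} = f^{-1}(p_n)$. Applying the mean value theorem to $f^n$ on $[p_{n+1}, p_n]$, and using $f^n(p_n) = A$ together with $f^n(p_{n+1}) = f^{-1}(A)$, furnishes $z_n \in [p_{n+1}, p_n]$ with
\[
Df^n(z_n) \;=\; \frac{A - f^{-1}(A)}{p_n - p_{n+1}}.
\]

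The task thus reduces to showing $n^2 (p_n - p_{n+1}) \to 2/D^2 f(a)$. Two ingredients combine. First, Lemma \ref{lem:orbitas} applied at $y = A$ gives $n(p_n - a) \to 2/D^2 f(a)$, and in particular $p_n - a \sim 2/(n \, D^2 f(a))$. Second, since $a$ is parabolic with $D^2 f(a) > 0$, the Taylor expansion $f(x) = x + \frac{1}{2} D^2 f(a)(x-a)^2 + o((x-a)^2)$ near $a$ inverts to
\[
x - f^{-1}(x) \;=\; \frac{1}{2} D^2 f(a) \, (x-a)^2 + o((x-a)^2), \qquad x \to a^+.
\]
Specializing this to $x = p_n$ and using $p_{n+1} = f^{-1}(p_n)$ yields
\[
p_n - p_{n+1} \;\sim\; \frac{1}{2} D^2 f(a) \, (p_n - a)^2 \;\sim\; \frac{2}{n^2 \, D^2 f(a)},
\]
which is exactly what is needed.

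Plugging this asymptotic back into the mean value identity gives
\[
\lim_{n \to \infty} \frac{Df^n(z_n)}{n^2} \;=\; \frac{[A - f^{-1}(A)] \cdot D^2 f(a)}{2},
\]
which, combined with $\Gamma^n_+(f) \geq Df^n(z_n)$, delivers the claimed $\liminf$ lower bound. I do not foresee a real obstacle: Lemma \ref{lem:orbitas} does the heavy lifting for the orbit decay, the mean value step is routine, and the Taylor inversion only uses the $C^2$ hypothesis together with $Df(a) = 1$ and $D^2 f(a) > 0$. The one item worth double-checking is that the $o((x-a)^2)$ error in $x - f^{-1}(x)$ remains negligible when one specializes to $x = p_n \to a$; this is immediate from uniform smallness of the Taylor remainder on a small one-sided neighborhood of $a$.
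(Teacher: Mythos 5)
Your proposal is correct and follows essentially the same route as the paper: mean value theorem along the backward orbit $f^{-n}(A)$, the orbit asymptotic from Lemma \ref{lem:orbitas}, and the second-order Taylor expansion of $f$ at $a$ to get $p_n - p_{n+1} \sim 2/(n^2 D^2f(a))$. The only cosmetic difference is that you invert the Taylor expansion to express $p_n - p_{n+1}$ in terms of $(p_n - a)^2$, whereas the paper writes $p_n - p_{n+1} = f(p_{n+1}) - p_{n+1}$ and expands at $p_{n+1}$ directly; both are valid and equivalent.
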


\begin{proof} For each $n \geq 1$, 
%denote $A_n := f^{-n}(A)$. 
there exists $A_n \in [f^{-n-1}(A),f^{-n}(A)]$ such that 
$$Df^n (A_n) = \frac{A - f^{-1}(A)}{f^{-n}(A) - f^{-n-1}(A)},$$
hence 
\begin{equation}\label{eq:crec}
\Gamma_+^n (f) \geq \frac{A - f^{-1}(A)}{f^{-n}(A) - f^{-n-1}(A)}.
\end{equation}
By the previous lemma, $f^{-n-1}(A) = a+  \frac{2}{n \, D^2f (a)} + o (\frac{1}{n})$. 
Since close to $a$ we have 
$$f(x) = f(a) + \frac{D^2f (a)}{2} (x-a)^2 + o ( |x-a|^2 ),$$
we thus conclude 
\begin{eqnarray*}
f^{-n} (A) - f^{-n-1} (A) 
&=& f( f^{-n-1} (A) ) -  f^{-n-1} (A) \\
&=& \frac{D^2f (a)}{2} \big| f^{-n-1}(A) -a \big|^2 + o \, \big( \big| f^{-n-1}(A)-a \big|^2 \big) \\
&=& \frac{D^2f(a)}{2} \left( \frac{2}{n \, D^2f (a)} \right)^2 + o \, \Big(\frac{1}{n^2}\Big) \\
&=& \frac{2}{n^2 \, D^2f (a)}  + o \Big(\frac{1}{n^2}\Big) .
\end{eqnarray*}
Introducing this equality into (\ref{eq:crec}) and passing to the limit finally proves the lemma.
\end{proof}

We can now proceed to show the second part of Lemma \ref{lem:dos-partes}. 

\vspace{0.3cm}

\noindent{\em Proof of the second half of Lemma \ref{lem:dos-partes} (existence of $B < b$).} 
By the previous lemma, there exists $\varepsilon > 0$ such that 
\begin{equation}\label{eq:siempre}
\Gamma_+^n (f) \geq 2\varepsilon \, n^2 \quad \mbox{for all } n \geq 1.
\end{equation} 
Fix $B^* > A$ close enough to $b$ so that 
\begin{equation}\label{eq:condition}
\left\| \frac{D^2f}{Df} \right\|_{\infty} \cdot [b-B^*] 
\cdot \exp \left( \left\| \frac{D^2f}{Df} \right\|_{\infty} \cdot |L| \right) \leq 2\varepsilon.
\end{equation}

Recall that we are dealing with a sequence $(x_n)$ such that $Df^n (x_n) = \Gamma_+^n (f)$, 
for which we let $y_n := f^n (x_n)$. The proof will be completed after showing the next two claims. 

\vspace{0.3cm}

\noindent{\underbar{Claim 1:}} There exists $ B' \in ( B^*,b)$ such that the inequalities $x_n \leq A$ and $y_n \geq B'$ 
cannot simultaneously hold. 

\vspace{0.2cm}

Let $X = X_{L}$ be the Szekeres vector field associated to $f$ on the interval $L^+ = [a,b)$.  We have $X \circ f = X \cdot Df$, 
which implies $X \circ f^n = X \cdot Df^n$ for all $n \geq 1$. Now, let $(f^t)$ be the flow of $X$ (so that $f^1 = f$). Note that 
$X \circ f^t = X \cdot Df^t$ holds for all $t$. Following Sergeraert \cite{Se77}, we compute 
$$\frac{f(x) - x}{X(x)} = \frac{1}{X(x)} \int_0^1 \frac{d}{dt} f^t(x) \, dt = \int_0^1 \frac{X (f^t (x))}{X(x)} \, dt = \int_0^1 Df^t (x) \, dt .$$
Since $Df^t (a) = 1$ for all $t$, the right-side expression converges to $1$ as $x$ goes to $a$ (because the flow 
$(f^t)$ depends in a $C^1$ manner on $x$). We hence conclude that, close to $a$,  
$$X(x) \sim f(x)-x \sim\frac{1}{2} D^2f(a) \cdot (x-a)^2.$$ 
We hence let $A' $ be such that $a < A' < \min \{ A, f^{-1} (B^*) \}$ and 
\begin{equation}\label{eq:abajo-de-A'}
X(x) \geq \frac{1}{3} D^2f(a) \cdot (x-a)^2 \quad \mbox{for all } x \in [a,A'].
\end{equation}

Similarly, let $Y$ be the Szekeres vector field associated to $f$ on the interval $(a,b]$.  
Again, we have $Y \circ f^n = Y \cdot Df^n$. Also,  close to $b$, we have $Y (x) \sim f(x)-x.$ 
Thus, letting  
$$C' := \frac{\max\{ 1, \max_{x \in [A',B^*]} X (x)\}}{\min_{x \in [A',B^*]} Y (x)},$$
we may choose $B' \in (B^*,b)$ such that 
\begin{equation}\label{eq:Y}
Y(y) \leq \min \left\{ \frac{\varepsilon}{C'} , \frac{\varepsilon}{3 C' \cdot D^2f (a)} \right\} \quad \mbox{for all } y \in [B',b].
\end{equation}
Recall from Lemma \ref{lem:orbitas} that $n \, (f^{-n}(A)-a)$ converges to $2 / D^2f(a)$. We may hence choose 
$N$ such that
\begin{equation}\label{eq:donde}
f^{-n} (A) - a \geq \frac{1}{n \cdot D^2f (a)} 
\quad \mbox{for all } n \geq N. 
\end{equation}
Finally, we may slightly increase $B'$ if necessary in order to satisfy $B' > f^{N} (A')$ 
(as well as (\ref{eq:Y})).

\vspace{0.15cm}

Now assume for a contradiction that $x_n \leq A$ and $y_n \geq B'$. There are two cases to consider:
\begin{itemize}
\item If $x_n \geq A'$ then, by the definition of $C'$, we have $1/Y(x_n) \leq C'$, and therefore  
$$Df^n (x_n) = \frac{Y(y_n)}{Y(x_n)} \leq \varepsilon,$$
which strongly contradicts (\ref{eq:siempre}). 
\item If $x_n < A'$, we let $i = i_n \geq 1$ be the (unique) integer such that $f^i (x_n) \in [A',f(A'))$. 
Then we compute 
$$Df^n (x_n) = Df^i (x_n) \cdot Df^{n-i} (f^i (x_n)) = \frac{X (f^i(x_n))}{X (x_n)} \cdot \frac{Y (y_n)}{Y (f^i(x_n))}.$$
Using (\ref{eq:abajo-de-A'}) and (\ref{eq:Y}) and later using the definition of $C'$, we obtain 
$$Df^n (x_n) 
\leq \frac{X (f^i(x_n))}{Y (f^i(x_n))} 
\cdot \frac{\varepsilon}{3 C' \cdot D^2f (a)}
\cdot \frac{3}{D^2f (a) \cdot (x_n-a)^2} 
\leq \frac{\varepsilon}{(D^2f(a))^2 \cdot (x_n-a)^2}. 
$$
Finally, since $B' > f^{N} (A)$, we must have $n \geq N$, which by  (\ref{eq:donde}) implies
$$x_n - a \geq f^{-n} (A) - a \geq \frac{1}{n \cdot D^2f (a)}.$$
However, introducing this in the previous inequality gives 
$$Df^n (x_n) \leq \varepsilon \, n^2,$$
which still contradicts (\ref{eq:siempre}). 
\end{itemize}

\vspace{0.3cm}

\noindent{\underbar{Claim 2}:} The inequality $x_n \geq A$ can hold only for finitely many integers $n$. 

\vspace{0.2cm}

Define the sequence $(a_n^*)$ by 
$$a_n^* := \max_{x \in [A,b]} \max \big\{ \log Df^i (x): 0 \leq i \leq n \big\}.$$
There are two cases to consider:

\begin{itemize}
\item Assume first that this sequence is bounded, say by a constant $C$. Fix an integer $N$ strictly larger 
than $\sqrt{\frac{C}{2\varepsilon}}$. By (\ref{eq:siempre}), if $n \geq N$ then one cannot have $x_n \geq A$, 
which gives the desired conclusion in this case.

\item Assume now that the sequence $(a_n^*)$ diverges, and  
let $x_n^*$ be a sequence of points in $[A,b]$ realizing $a_n^*$, 
that is, such that $\log Df^{i_n}(x_n^*) = a_n^*$ for some $i_n \leq n$. 
We first claim that there exists $N$ such that $x_n^* \geq f(A)$ for all $n \geq N$. 
To show this, let $\Delta := \min_{x \in [A,f(A)]} (f(x) - x)$, and fix $N'$ such that $f^{N'} (A) > b - e^{-V(f)} \cdot \Delta$. 
If $n \geq N'$ then, for any $x^* \in [A,f(A)]$, 
$$f(f^n(x^*)) - f^n(x^*) < b - f^{N'}(A) < e^{-V(f)} \cdot \Delta 
\quad \mbox{ and } \quad 
f(x^*) - x^* \geq \Delta.$$ 
Taking  $\xi^* \in [x^*,f(x^*)]$ such that 
$$Df^n (\xi^*) = \frac{f^n(f(x^*)) - f^n(x^*)}{f(x^*)-x^*},$$ 
this implies 
$$Df^n (\xi^*) \leq \frac{e^{-V(f)} \cdot \Delta}{\Delta} = e^{-V(f)}.$$
Moreover, by Denjoy's control of distortion argument, one has $Df^n(x^*) / Df^n (\xi^*) \leq e^{V(f)}$. Therefore,  
for all $n \geq N'$ and all $x^* \in [A,f(A)]$, 
$$Df^n (x^*) < 1.$$
Since $Df^n (x^*) \leq (\| Df \|_{\infty})^{N'}$ obviously holds for all $n \leq N'$ and all $x^* \in [A,f(A)]$, the divergence 
of $(a_n^*)$ shows that $x_n^*$ cannot belong to $[A,f(A)]$ for $n$ larger than a certain integer $N$.

Next we claim that, for all $n \geq N$, the following inequality holds: 
\begin{equation}\label{eq:estrella}
2a_n^* - a_{n-1}^* - a_{n+1}^* \leq 2 \varepsilon e^{-a_n^*}.
\end{equation}
Indeed,  let $i_n \leq n$ be such that 
$$a_n^* = \log Df^{i_n} (x_n^*).$$
Obviously,
$$a_{n-1}^* \geq \log Df^{i_n - 1} (f(x_n^*)).$$
Moreover, since $x_n^* \geq f(A)$, we have $f^{-1}(x_n^*) \geq A$, hence 
$$a_{n+1}^* \geq \log Df^{i_n + 1} (f^{-1}(x_n^*)).$$
Having this at hand, to prove (\ref{eq:estrella}) 
one may proceed as in the proof of Lemma \ref{lem:general-entero} 
using that $f^{i_n-1} (x_n^*) > B^*$ for $n \geq N$
and introducing (\ref{eq:condition}) in the end.

Now define the sequence $b_n^* := a_{n+N}^* - a_N^*$. Note that $b_0 = 0$ 
and that inequality (\ref{eq:estrella}) gives 
$$2b_n^* - b_{n-1}^* - b_{n+1}^* \leq 2 \varepsilon e^{-a_N^*} e^{-b_n^*}.$$
Moreover, (\ref{eq:sublinear}) easily implies that $a_n^* / n $ converges to 0, which is hence also the 
case of $b_n^*/n$. The lemma on quasiconvex sequences then implies that, for all $n \geq 1$, 
$$e^{b_n^*} \leq \left( n \sqrt{\frac{2 \varepsilon e^{-a_N^*}}{2}} + 1 \right)^2$$ 
that is, 
$$e^{a_{n+N}^*} \leq \left( n \sqrt{\varepsilon} + \sqrt{e^{a_N^*}} \right)^2,$$
which obviously gives 
$$\limsup_{n \to \infty} \frac{e^{a_n^*}}{n^2} \leq \varepsilon.$$
In virtue of (\ref{eq:siempre}), this implies the claim in this case.
\end{itemize}

\vspace{0.2cm}

We are finally in position to conclude the proof of (the second half of) Lemma \ref{lem:dos-partes}. 
Indeed, by Claim 1 above, if $y_n \geq B'$ then $x_n$ must lie in $[A,b]$. However, according to 
Claim 2, this may only happen finitely many times, say for the powers $n_1 < n_2 <  \ldots < n_k$. 
For each of these $n_i$, the derivative $Df^{n_i} (x)$ converges to $1$ as $x$ goes to $b$. Therefore, 
there exists $\delta$ such that any $x$ realizing the maximum of $Df^{n_i}$ satisfies $x \leq b-\delta$. 
The number $B := \max \{ B' , f^{n_k} (b-\delta) \}$ then satisfies all of our requirements. 
$\hfill\square$

\vspace{0.2cm}

\begin{rem} \label{r:todavia} The proof of Section \ref{s:problem} as well as that of the second case in Claim 2 above are the only parts of the proof 
of our Main Theorem where we do not know how to proceed without using Polterovich-Sodin's lemma on quasiconvex sequences. The problem is 
that we may be dealing with an infinitely flat fixed point $b$ and, in this situation, there seems to be no control of the generating vector field $Y$ 
leading to a direct proof of subquadratic growth of derivatives along iterates about it, despite the fact that it is topologically contracting. See the 
example in Section \ref{s:ex} that illustrates this phenomenon. Needless to say, this issue remains somewhat mysterious to us. 
\end{rem}

\vspace{0.2cm}

We are finally in position to finish the proof of the Main Theorem in the present case.  To do this, we consider again the 
the Szekeres vector field $X = X_{L}$ associated to $f$ on the interval $L^+ = [a,b)$.  We have 
\begin{equation}\label{eq:vamos}
\Gamma^n_+ (f) 
= Df^n (x_n) 
= \frac{X (y_n)}{X (x_n)}.
\end{equation}
Also, we know that, for $x$ close to $a$, 
$$X(x) \sim \frac{1}{2} D^2f(a) \cdot (x-a)^2.$$ 
Since $y_n$ belongs to $[A,B]$ for all $n$, Lemma \ref{lem:orbitas} implies that  
$$x_n = f^{-n} (y_n) \sim a + \frac{2}{n \, D^2f (a)}.$$ 
Therefore, 
$$X (x_n) \sim \frac{1}{2} D^2 f (a) \left(  \frac{2}{n \, D^2f (a)} \right)^2 = \frac{2}{n^2 \, D^2f(a)}.$$
Introducing this into (\ref{eq:vamos}) and passing to the limit gives 
\begin{equation}\label{eq:por-arriba}
\limsup_{n \to \infty} \frac{\Gamma^n_+ (f)}{n^2} \leq \lim_{n \to \infty} \frac{\sup_{y \in L} X(y)}{n^2 \, X (f^{-n}(y_n))} 
\leq \frac{1}{2} D^2f (a) \cdot \sup_{y \in L} X(y) .
\end{equation}

Now recall that the vector field $X$ continuously extends to the closure of $L$ by letting $X (b) = 0$ (see \cite[Chapter IV]{Yo95}). 
In particular, there exists a point $y^+ \in L^{\mathrm{o}}$ such that $X(y^+) = \max_{y \in L} X(y)$. If we let $x_n^+ := f^{-n} (y^+)$, then
$$\Gamma_+^n (f) \geq Df^n (x_n^+) = \frac{X (f^n (x_n^+))}{X(x^+_n)} = \frac{X(y^+)}{X (x_n^+)} =  \frac{\max_{y \in L} X(y)}{X (x_n^+)}.$$
Again, \, $x_n^+ = f^{-n} (y^+) = a +  \frac{2}{n \, D^2f (a)} + o \big( \frac{1}{n} \big)$ \, and \, $X(x_n^+) \sim \frac{2}{n^2 \, D^2f(a)}$, \, 
which implies that 
\begin{equation}\label{eq:por-abajo}
\limsup_{n \to \infty} \frac{\Gamma^n_+ (f)}{n^2} \geq \frac{1}{2} D^2f (a) \cdot \max_{y \in L} X(y) .
\end{equation}
Finally, putting (\ref{eq:por-arriba}) and (\ref{eq:por-abajo}) together yields (\ref{eq:limit-spf}).

%%%%%%%%%%%%%%%%%%%%%%%%%%%%%%%%%%%%%%%%%%%%%%%%%%%%%%%%%%%%%%%%%%%%%%%

\section{A proof of the Main Theorem: interior fixed points}

Having proved that the Main Theorem is valid for diffeomorphisms with no interior fixed points, we next proceed to prove it in general. 
Again, we will systematically use Polterovich-Sodin's lemma on quasiconvex sequences, though one can avoid this using generating 
vector fields and Yoccoz' continuity theorem \cite[Chapter IV]{Yo95} together with further -though elementary- computations.

We distinguish two cases, according to whether the diffeomorphism is quadratically tangent to the identity at all fixed points or not. 

%%%%%%%%%%%%%%%%%%%%%%%%%%%%%%%%%%%%%%%%%%%%%%%%%%%%%%%%%%%%%%%%%%%%%%%

\subsection{The case of quadratic tangencies to the identity at all fixed points}

Fix $\varepsilon > 0$, denote again 
$$C' := | L | \cdot \left\| \frac{D^2 f}{Df} \right\|_{_{\infty}},$$ 
and let 
$$\delta := \frac{2 \varepsilon}{e^{C'} \left\| \frac{D^2 f}{Df} \right\|_{\infty}}.$$
There are  only finitely many components of $f$ of length $> \delta$. Denote them by $J_1,\ldots,J_k$. By our 
assumption, for the restriction $f|_{J_i}$ of $f$ to each $J_i$, the result from Section \ref{s:problem} gives
$$\lim_{n \to \infty} \frac{\Gamma^n_+ (f|_{J_i})}{n^2} = 0.$$
This implies that there exists $N=N_{\varepsilon}$ such that
\begin{equation}\label{eq:enlosji}
\frac{Df^n (x)}{n^2} \leq \varepsilon \quad \mbox{ for all } n \geq N \mbox{ and all } x \in J_1 \cup \ldots \cup J_k.
\end{equation} 
Now, if $J$ is any other component of $f$, then $|J| \leq \delta$. By the choice of $\delta$, 
Lemma \ref{lem:general-entero} gives
\begin{equation}\label{eq:enlosotros}
Df^n (x) \leq (n \sqrt{\varepsilon} + 1)^2 \quad \mbox{ for all } n \geq 1 \mbox{ and all } x \in J.
\end{equation}
Putting (\ref{eq:enlosji}) and (\ref{eq:enlosotros}) together, we obtain
$$\limsup_{n \to \infty} \frac{\sup_x Df^n (x)}{n^2} \leq \varepsilon.$$
Since this holds for every $\varepsilon > 0$, we conclude that $\Gamma^n_+(f) / n^2$ converges to 0. 

%%%%%%%%%%%%%%%%%%%%%%%%%%%%%%%%%%%%%%%%%%%%%%%%%%%%%%%%%%%%%%%%%%%%%%%%%%%%%%%%%%%

\subsection{The case of a non-quadratic tangency to the identity at a fixed point}

Here we assume that $f$ has at least one component $I_*$ with a topologically repelling fixed point 
at which $f$ is not quadratically tangent to the identity. By the case of no interior fixed points, we have 
$$\lim_{n \to \infty} \frac{\Gamma_+^n (f|_{I_*})}{n^2} 
= 
\frac{1}{2} D^2 f (r_{I_*}) \cdot \max_{y \in I_*} \big| X_{I_*} (y) \big| > 0.$$
Fix $\varepsilon > 0$ such that 
\begin{equation}\label{eq:porabajo}
\varepsilon < \frac{1}{2} D^2 f (r_{I_*}) \cdot \max_{y \in I_*} \big| X_{I_*} (y) \big|,
\end{equation}
and define again 
$$C' := | L | \cdot 
\left\| \frac{D^2 f}{Df} \right\|_{_{\infty}}, \qquad \delta := \frac{2 \varepsilon}{e^{C'} \left\| \frac{D^2 f}{Df} \right\|_{\infty}}.$$
As in the previous section, there are finitely many components of $f$ of length $>  \delta$, say $J_1,\ldots,J_k$. 
On any other component $J$, inequality (\ref{eq:enlosotros}) holds, which means that 
\begin{equation}\label{eq:cota-afuera}
Df^n (x) \leq \left( n \sqrt{\varepsilon} + 1 \right)^2 
\quad \mbox{for all } x \notin J_1 \cup \ldots \cup J_k 
\mbox{ and all }n \geq 1.
\end{equation}
In particular, by the case of diffeomorphisms with no interior fixed points, for each such $J$ we have  
\begin{equation}\label{eq:enpequeños}
\frac{D^2f (r_J)}{2} \cdot \max_{y \in J} \big| X_{f,J} (y) \big| = \lim_{n \to \infty} \frac{\Gamma^n_+ (f|_J)}{n^2} \leq \varepsilon.
\end{equation}
Since 
$$\lim_{n \to \infty} \frac{\Gamma_+^n (f|_{I_*})}{n^2} > \varepsilon,$$
(\ref{eq:cota-afuera}) implies that 
$$\lim_{n \to \infty} \frac{\Gamma_+^n (f)}{n^2} 
= \max_{1\leq i \leq k} \lim_{n \to \infty} \frac{\Gamma_+^n (f|_{J_i})}{n^2}.$$
By the case with no interior fixed points, this gives
$$\lim_{n \to \infty} \frac{\Gamma_+^n (f)}{n^2} 
= \max_{1 \leq i \leq k} \left[ \frac{1}{2} D^2 f (r_{J_i}) \cdot \max_{y \in J_i} \big| X_{J_i} (y) \big|  \right].$$ 
In virtue of (\ref{eq:porabajo}) and (\ref{eq:enpequeños}), this finally give 
 $$\lim_{n \to \infty} \frac{\Gamma^n_+ (f)}{n^2} = 
\sup_{I \in \mathcal{C} (f)} \left[ \frac{1}{2} D^2 f (r_I) \cdot  \max_{y \in I} \big| X_{I} (y) \big| \right].$$

%%%%%%%%%%%%%%%%%%%%%%%%%%%%%%%%%%%%%%%%%%%%%%%%%%%%%%%%%%%%%%%%%%%%%%%%%%

\section{An example}
\label{s:ex}

To better explain the pertinence of the example that follows, let us first state a result that deals with non infinitely flat fixed points, 
with the constraint that the topologically repelling fixed point is (at least) quadratically tangent to the identity. For simplicity, we state it for 
$C^{\infty}$ diffeomorphisms, yet it easily extends to $C^2$ diffeomorphisms with tame behavior near the fixed points, according 
to the hypothesis below. Also, we will only consider diffeomorphisms with no interior fixed point, leaving the treatment of the 
general case to the reader. The proposition below is very close to a result from \cite{Bo04}, yet slightly more general. Note 
that it still holds for $k=1$, where it becomes a particular case of the Main Theorem.

\begin{prop} Let $f$ be a $C^{\infty}$ diffeomorphism of a closed interval $L$ with no fixed point inside. 
Suppose that $f$ is tangent to the identity exactly up to order $k \geq 2$ at the topologically repelling 
fixed point, and that it is not infinitely flat  at the topologically contracting fixed point. Then 
$$\lim_{n \to \infty} \frac{\Gamma^n_+ (f)}{n^{\frac{k+1}{k}}} 
= \sqrt[k]{\frac{k^{k+1} \cdot Df^{k+1}(r_L)}{(k+1)!}} \cdot \max_{y \in L} \big| X_{L} (y) \big| .$$
\end{prop}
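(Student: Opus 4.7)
The plan is to adapt the argument of Section~\ref{s:new} to higher-order tangency, exploiting the non-flatness hypothesis at the topologically contracting endpoint to bypass the Polterovich--Sodin machinery in the localization step. Fix $L = [a,b]$ with $a = r_L$ and $f$ pushing right, so that near $a$ one has $f(x) = x + s(x-a)^{k+1} + o((x-a)^{k+1})$ with $s := Df^{k+1}(a)/(k+1)! > 0$; by hypothesis, $f$ is also parabolic of some finite order $k' \geq 1$ at $b$, with $f(x) = x + c'(b-x)^{k'+1} + o((b-x)^{k'+1})$ there.

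First I would upgrade the two preliminary asymptotics. The Sergeraert computation of Section~\ref{s:new} gives verbatim $X_L(x) \sim f(x) - x$, hence $X_L(x) \sim s(x-a)^{k+1}$. For the generalization of Lemma~\ref{lem:orbitas}, the conjugation of $f^{-1}$ near $a$ by the involution $z \mapsto 1/z^k$ produces a map of the form $z \mapsto z + ks + o(1)$, from which iteration yields
$$
n^{1/k} \bigl( f^{-n}(y) - a \bigr) \longrightarrow (ks)^{-1/k}
$$
uniformly on compact subsets of $L^{\mathrm{o}}$. Combining these, for any sequence $(y_n)$ in a compact $K \subset L^{\mathrm{o}}$,
$$
X_L \bigl( f^{-n}(y_n) \bigr) \sim s \cdot (ks)^{-(k+1)/k} \cdot n^{-(k+1)/k}.
$$
Applied to $y_n \equiv y^+$ realizing $\sup_L X_L$ (continuously extended by $X_L(b) = 0$, see \cite[Chapter IV]{Yo95}), this already provides the lower bound matching the stated formula, so in particular $\Gamma^n_+(f) \to \infty$.

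The analogue of Lemma~\ref{lem:dos-partes} is the main step. If $(x_n)$ is a maximizing sequence with $y_n := f^n(x_n)$, the existence of $A > a$ such that $y_n \geq A$ is unchanged from Section~\ref{s:new}, based on the strict monotonicity of $Df$ on a right-neighbourhood of $a$. For the existence of $B < b$, the non-flatness hypothesis at $b$ is decisive: it supplies the Szekeres vector field $Y$ on $(a,b]$ satisfying $Y(y) \sim c'(b-y)^{k'+1}$, and $Y$ is strictly decreasing on a left-neighbourhood of $b$. Hence $Df^n(x) = Y(f^n(x))/Y(x) < 1$ for $x$ close to $b$, while on any intermediate compact $[A, b-\eta]$ the Szekeres identity gives $Df^n(x) \leq \sup_L Y / \min_{[A, b-\eta]} Y$. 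Consequently $\sup_{[A,b]} Df^n$ remains uniformly bounded in $n$, which together with $\Gamma^n_+(f) \to \infty$ forces $x_n \notin [A, b]$ for large $n$. The residual case $x_n \leq A$ with $y_n$ close to $b$ is excluded exactly as in Claim~1 of Section~\ref{s:new}: the factorization
$$
Df^n(x_n) = \frac{X_L(f^i(x_n))}{X_L(x_n)} \cdot \frac{Y(y_n)}{Y(f^i(x_n))}
$$
at the first entry time $i$ into a fixed interval $[A', f(A'))$ is bounded by a constant multiple of $Y(y_n) / X_L(x_n)$, which can be made at most $\varepsilon\, n^{(k+1)/k}$ with $\varepsilon$ arbitrarily small by shrinking $B'$ towards $b$ and using $X_L(x_n) \geq c\, n^{-(k+1)/k}$ from the generalized orbit asymptotic.

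With the localization in hand, the numerator in $\Gamma^n_+(f) = X_L(y_n)/X_L(f^{-n}(y_n))$ is bounded by $\sup_L X_L$ and achieves this value along the backward orbit of $y^+$, while the denominator has the asymptotic computed above, which combine to give
$$
\lim_{n \to \infty} \frac{\Gamma^n_+(f)}{n^{(k+1)/k}} = \frac{\sup_L X_L}{s} \cdot (ks)^{(k+1)/k} = \biggl( \frac{k^{k+1} \cdot Df^{k+1}(a)}{(k+1)!} \biggr)^{\!1/k} \sup_L X_L.
$$
The main obstacle I expect is the localization: replacing the quasiconvex-sequence argument (which only yields quadratic bounds, insufficient here when $k \geq 2$) with the Szekeres-field-based monotonicity near $b$ requires carefully extracting the strict monotonicity of $Y$ from the finite tangency order $k'$, which is standard in $C^{\infty}$ regularity but deserves explicit justification.
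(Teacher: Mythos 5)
Your proposal is correct and follows essentially the same route as the paper's sketch: the same conjugation $z\mapsto 1/z^{k}$ yielding the translation-by-$ks$ asymptotic for backward orbits (the generalization of Lemma \ref{lem:orbitas}), the same Szekeres-field identity $Df^n = X_L\circ f^n / X_L$ with the endgame at a point realizing $\sup_L X_L$, and the same use of non-flatness at the contracting endpoint to recover Lemma \ref{lem:dos-partes}. The only minor divergence is in that localization step, where the paper simply observes that $Df$ is strictly increasing, hence $<1$, near the non-flat contracting fixed point and contradicts the maximality of $x_n$ directly, whereas you re-run the Claim-1 estimates with the field $Y$ and therefore need the strict monotonicity of $Y$ near $b$ that you rightly flag as requiring justification; both variants work.
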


\noindent{\em Sketch of Proof.} Up to two key steps, the proof is very similar to that of the Main Theorem. 
To fix ideas, we assume that $f$ sends all the interior points of $L:= [a,b]$ to the right, hence $r_L = a$.
\begin{itemize}
\item The first difference concerns the asymptotic of the orbits (of the inverse map) near the topologically repelling fixed point. 
Namely, in this case, for all $y \in (a,b)$ one has
$$\lim_{n \to \infty} n^{\frac{1}{k}} \, (f^{-n} (y) - a) = \sqrt[k]{\frac{ (k+1) \cdot (k-1)!}{ Df^{k+1} (a)}},$$
and the convergence is uniform on compact subsets of $(a,b)$.  
The proof is similar to that of Lemma \ref{lem:orbitas}. One first reduces to study germs of the form 
$g(x) = x - sx^{k+1} + o(x^{k+1})$, with $s > 0$. Then, one conjugates by $I_k (z) := 1 / z^{\frac{1}{k}}$ 
to obtain that $G:=I \circ g \circ I^{-1}$ is of the form $ G(z) = z+sk+o(1)$, from where one easily 
deduces the announced asymptotic. 

\item The second difference lies in the growth of the derivatives of the iterates near the topologically contracting fixed point. 
Indeed, in a neighborhood of this point, the derivative must be strictly increasing, hence smaller than 1 in its interior. 
This allows showing that Lemma \ref{lem:dos-partes} still holds, with a much simpler proof for the second half of it. 
Namely, for a point $x_n$ that realizes $\Gamma_+^n (f)$, the point $y_n := f^n (x_n)$ cannot lie on $[f(B),b]$ if $Df$ 
is strictly smaller than 1 on $(B,b)$ and $n$ is large enough. Indeed, assume that $y_n > f(B)$. If, on the one hand, 
$Df$ is strictly larger than 1 on $(a,x_n)$, then $Df^n (f^{-1}(x_n)) > Df^n (x_n)$. If, on the other hand, $Df$ is 
not strictly larger than $1$ on $(a,x_n)$, then there would exist $a'>a$ such that $x_n \geq a'$ for all $n$. Since 
obviously we have $x_n \leq B$ for large enough $n$, this would yield that 
$$Df^n (x_n) = \frac{Y(y_n)}{Y(x_n)} \leq \frac{Y (y_n)}{\min_{x \in [a',B]} Y(x)}$$
converges to $0$ as $n$ goes to infinity, which is impossible.
\end{itemize}
Using the two items above, the proof is concluded via the very same final argument. We leave the details to the reader. 
$\hfill\square$

\vspace{0.35cm}

What we next show is that, quite surprisingly, the proposition above does no longer hold without the hypothesis 
made for the topologically contracting fixed point. The example we present is actually the inverse of the map constructed 
by Watanabe in \cite{Wa04}. The interesting fact is that this inverse map still gives strong (yet subquadratic) infinitesimal 
expansion along the iterates in certain regions close to the topologically contracting fixed point, which is somewhat 
counter-intuitive. For the sake of clearness, we state this as a proposition. 

\vspace{0.1cm}

\begin{prop} \label{prop:ejem} 
For each $k \geq 2$, there exists a $C^\infty$ diffeomorphism $f$ of the interval with no fixed point in the interior and such that: 
\begin{itemize}
\item it is infinitely flat at the topologically contracting fixed point;
\item $k$ is its exact order of tangency to the identity at the topologically repelling fixed point;  
\item for every $0<\tau<2$, one has 
     $$  \limsup_{n\rightarrow \infty}\frac{\Gamma^n_+(f)}{n^{\tau}}=\infty. $$
\end{itemize}
\end{prop}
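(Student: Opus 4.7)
The plan is to take $f := g^{-1}$, where $g$ is (an adaptation of) the explicit $C^{\infty}$ diffeomorphism constructed by Watanabe in \cite{Wa04}. Watanabe produces a $C^{\infty}$ diffeomorphism $g$ of a closed interval $L = [a,b]$ with no interior fixed point, pushing every interior point to the right, infinitely flat at the topologically repelling endpoint $a$, tangent to the identity of a prescribed finite order at the topologically contracting endpoint $b$, and such that
$$\limsup_{n \to \infty} \frac{\Gamma^n(g)}{n^{\tau}} = +\infty \quad \text{for every } \tau < 2.$$
A routine adaptation of his inductive scheme (prescribing the germ at $b$ as $g(y) = y + c(y-b)^{k+1} + o((y-b)^{k+1})$ with a suitably signed constant $c$, while keeping the same inductive construction of the flat part on the fundamental domains accumulating at $a$) gives, for each $k \geq 2$, such a $g$ with exact order $k$ tangency at $b$.

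I would first verify the first two bullet items for $f := g^{-1}$ directly. Inversion preserves $C^{\infty}$ regularity and the absence of interior fixed points; it exchanges the topologically repelling and contracting endpoints (so $b$ becomes repelling and $a$ becomes contracting for $f$, since $f$ pushes interior points to the left); and it preserves the order of tangency to the identity at a parabolic fixed point, since in local coordinates $g(y) = y + c y^{k+1} + o(y^{k+1})$ implies $g^{-1}(y) = y - c y^{k+1} + o(y^{k+1})$. Hence $f$ is infinitely flat at the contracting endpoint $a$ and has exact order $k$ tangency at the repelling endpoint $b$.

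To establish the growth of $\Gamma^n_+(f)$, I rewrite
$$\Gamma^n_+(f) = \sup_x Df^n(x) = \sup_x \frac{1}{Dg^n(f^n(x))} = \frac{1}{\min_y Dg^n(y)},$$
and recall $\Gamma^n(g) = \max\{\Gamma^n_+(g), \Gamma^n_+(f)\}$. Watanabe's estimate therefore implies that, for each $\tau < 2$, the inequality $\max\{\Gamma^n_+(g), \Gamma^n_+(f)\} \geq n^{\tau}$ holds infinitely often. It remains to rule out that $\Gamma^n_+(g)$ alone realizes the divergence: since $g$ has finite order tangency at its contracting endpoint $b$, we have $Dg < 1$ in a one-sided neighborhood $(b-\eta, b)$, into which every forward orbit of $g$ eventually enters and stays; a Denjoy-style distortion argument (analogous to Claim~2 in the proof of Lemma \ref{lem:dos-partes}, applied on the contracting side) yields a uniform bound $\Gamma^n_+(g) \leq C$. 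Consequently the near-quadratic divergence is carried by $\Gamma^n_+(f)$, as required.

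The main obstacle is this last bound on $\Gamma^n_+(g)$. While the contracting region near $b$ contributes factors strictly less than $1$, the portion of any orbit spent near the flat fixed point $a$ contributes a product of factors close to (and potentially slightly above) $1$; the argument has to combine compactness in the transition region, the asymptotic $Dg(y) \to 1$ as $y \to a$, and Denjoy distortion, to absorb the contribution of the orbit segment before it enters $(b-\eta, b)$. Once this technical step is settled, the three bullet items follow for $f = g^{-1}$.
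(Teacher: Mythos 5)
Your reduction to Watanabe's two-sided estimate breaks down at the elimination step. The claimed uniform bound $\Gamma^n_+(g) \leq C$ is false: for any nontrivial diffeomorphism $g$ with a parabolic topologically repelling endpoint $a$ and no interior fixed point, take a fundamental domain $[y_0,g(y_0)]$ in the interior; its preimages $[g^{-n}(y_0),g^{-n+1}(y_0)]$ shrink to zero length while their image under $g^n$ has fixed length, so by the mean value theorem $\sup_x Dg^n(x)\to\infty$. Your appeal to the contracting neighborhood $(b-\eta,b)$ ignores the points near the repelling flat endpoint $a$, which take arbitrarily long to reach that neighborhood and along the way accumulate enormous expansion; the Denjoy-style argument of Claim~2 in the proof of Lemma \ref{lem:dos-partes} only bounds $Df^n$ at points of a compact fundamental domain away from the repelling endpoint, never the global supremum. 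Worse, for Watanabe's specific $g$ the forward growth $\Gamma^n_+(g)$ is itself nearly quadratic --- the expansion in the ``wells'' accumulating at the flat repelling point is exactly the mechanism of his example --- so even a weakened version of your bound (say, subpolynomial or $O(n^{(k+1)/k})$ growth of $\Gamma^n_+(g)$) is unavailable: the proposition preceding Proposition \ref{prop:ejem} requires exact order $k$ at the repelling point and a non-flat contracting point, neither of which holds for $g$.

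There is also a structural reason why no such elimination can work: $\Gamma^n(g)=\max\{\Gamma^n_+(g),\Gamma^n_+(g^{-1})\}=\Gamma^n(g^{-1})$ is symmetric under inversion, so Watanabe's statement cannot, by itself, tell you which of the two one-sided quantities carries the divergence. The entire content of Proposition \ref{prop:ejem} is the one-sided, counter-intuitive assertion that the \emph{forward} derivatives of the inverse map blow up nearly quadratically at points near the \emph{contracting} flat fixed point. This is why the paper does not cite Watanabe's estimate but redoes the analysis from scratch on the vector field $X=Y+Z$ (the negative of Watanabe's): one compares the flow of $Y$ with the affine flows of the quadratic fields $\bar Y_k,\tilde Y_k$ on $[b_k,a_k]$ near the zeros $a_k=\frac{1}{2\pi k}$ of $Y$, obtaining $Dh^{v_k}(c_k)\geq v_k^2/\log v_k$, and then uses the correcting term $Z$ (Claim~2 there) to pass from $Y$ to $X$ and conclude $Df^{t_k}(c_k)\geq \frac12 t_k^2/\log t_k$. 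Your first two bullet points (regularity, exchange of repelling/contracting under inversion, preservation of the order of tangency, and gluing in an exact order $k$ tangency at the other endpoint) are fine and match the paper, but the third bullet needs these explicit flow estimates; it does not follow formally from the two-sided growth of $\Gamma^n(g)$.
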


\vspace{0.1cm}

We give a proof for this for completeness. First, as it was discussed above, what we need to build is a germ of a topologically contracting 
$C^{\infty}$ diffeomorphism that is infinitely flat at the fixed point yet there exists a sequence of points $x_n$ converging to this point  
such that, for all $\tau < 2$, 
\begin{equation}\label{eq:tau} 
\limsup_{n \to \infty} \frac{Df^n (x_n)}{n^{\tau}} = \infty.
\end{equation}
Indeed, one can easily extend this germ to a $C^{\infty}$ diffeomorphism of a closed interval having no interior fixed point and with 
a tangency to the identity of exact order $k$ at the other fixed point. By the previous discussion, such a diffeomorphism satisfies 
the properties of the proposition above. 

For simplicity of the computations, our germ will be defined around the origin. 
We consider the vector field $X(x)=Y(x)+Z(x)$, where for small $x>0$ we let 
$$        Y(x) := \frac{1}{2}\left(\cos\frac{1}{x}-1\right)\exp\left(-\frac{1}{x}\right), \qquad
        Z(x) :=-\exp\left(-\exp\left(\frac{3}{x}\right)\right). 
$$
We extend them to the origin by letting $X(0)=Y(0)=Z(0)=0$. We leave to the reader to check that $X,Y,Z$ are smooth and infinitely 
flat at the origin, and that $X (x) < 0$ for small $x>0$. 
(Note that these are the negative of the vector fields considered in \cite{Wa04}.) We denote by $f^t$ (resp. $h^t$) 
the flow associated to $X$ (resp. $Y$). We claim that there exist a sequence $(t_k)$ of times that goes to infinity and a sequence 
$(c_k)$ of points converging to the origin such that, for large-enough $k$, one has
\begin{equation}\label{eq:reduced}
    D f^{t_k }(c_k) \geq \frac{1}{2}\cdot \frac{{t_k^2}}{\log(t_k)}.
\end{equation}
Although the times $t_k$ are not necessarily integers, the $C^1$ continuity of the flow of $X$ implies (\ref{eq:tau}) 
for the germ of diffeomorphism $f := f^1$ 
just replacing $t_k$ by its integer part. 
    
We are thus reduced to show (\ref{eq:reduced}). We start by giving a lower bound for the derivative of the flow associated to $Y$. 
As before, we denote $h:= h^1$.

\vspace{0.32cm}

\noindent{\underline{Claim 1:}} There exists a sequence of times $(v_k)$ that goes to infinity  
and a sequence of points $(c_k)$ converging to the origin such that 
 \begin{equation} \label{eq:estim-Y}
     Dh^{v_k} (c_k) \geq \frac{v_k^2}{\log (v_k)}.
 \end{equation}

To show this, for each $k\geq 1$ we let $a_k := \frac{1}{2\pi k}$. One readily checks that 
$$    Y(a_k)=0, \qquad DY(a_k)=0, \qquad D^2Y(a_k)=-\frac{1}{2}(2\pi k)^4\exp(-2\pi k) < 0.$$
We denote $\varepsilon_k := D^2Y(a_k).$ \, A straightforward computation shows that there exists $M\in (0,2\pi)$ 
such that, if we denote $b_k := \frac{1}{2\pi k+ M} < a_k$, then for each $k \geq 1$ and all $x\in [b_k,a_k]$, it holds 
\begin{equation}\label{eq:entre-medio}
2\varepsilon_k\leq D^2Y (x)\leq \frac{1}{2}\varepsilon_k.
\end{equation}
For each $k\geq 1$, we consider the vector fields $\bar{Y}_k, \tilde{Y}_k$ around $a_k$ given by  
$\bar{Y}_k(x) := \varepsilon_k(x-a_k)^2$ \, and \, $ \tilde{Y}_k(x) := \frac{\varepsilon_k}{4}(x-a_k)^2.$ \, 
By (\ref{eq:entre-medio}), for all  $x\in[b_k,a_k]$,  
\begin{equation}\label{eq:ineq}
    \bar{Y}_k (x)\leq Y (x) \leq \tilde{Y}_k (x).
\end{equation}
Note that these vector fields induce affine flows, that we denote $\bar{h}^t$ and $\tilde{h}^t$ accordingly. 
In concrete terms, 
\begin{equation}\label{eq:flow}
    \bar{h}^t (x) = a_k+\frac{x-a_k}{1-\varepsilon_k t (x-a_k)},\qquad D \bar{h}^t (x)=\frac{1}{(1-\varepsilon_k t (x-a_k))^2}.
\end{equation}
In particular, 
\begin{equation}\label{estim}
    D \bar{h}^{t} (h^{-t} (b_k)) 
    = \frac{1}{D\bar{h}^{-t}(b_k)}
    = (1+\varepsilon_k t (b_k-a_k))^2 
    \geq \varepsilon_k^2 (a_k-b_k)^2 t^2.
\end{equation}
For each $k\geq 1$, we consider the times
\begin{equation} \label{defin i_k}
    u_k := \exp\left(\frac{1}{4\varepsilon_k^2(a_k-b_k)^2} \right)
\end{equation}
and we let $c_k\in [b_k,a_k]$ be defined as $c_k := \bar{h}^{-u_k}(b_k)$. Since $\varepsilon_k^2(a_k-b_k)^2\rightarrow 0$ as $k$ goes to infinity, 
we have that $u_k\rightarrow \infty$ as $k$ goes to infinity. 
For each $k \geq 1$, we let $v_k,w_k$ be the (unique) times for which 
   $$ h^{v_k} (c_k) = \tilde{h}^{w_k}(c_k) = b_k.$$
Observe that (\ref{eq:ineq}) implies that $w_k\leq v_k \leq u_k.$ 

We can now estimate the derivative of the flow $h^t$. Indeed, using the definition of $c_k$ and inequality (\ref{eq:ineq}), 
we obtain 
\begin{equation}\label{eq:unila}
    Dh^{v_k}(c_k) 
    = \frac{Y (h^{v_k}(c_k))}{Y(c_k)} 
    = \frac{Y(b_k)}{Y(c_k)}
    \geq \frac{\bar{Y}_k(b_k)}{\tilde{Y}_k(c_k)} 
    = 4\cdot\frac{\bar{Y}_k (\bar{h}^{u_k}(c_k))}{ \bar{Y}_k(c_k)}
    = 4\cdot D\bar{h}^{u_k}(c_k). 
\end{equation}
The right-side expression equals \,  $4 \! \cdot \!D \bar{h}^{u_k} (\bar{h}^{-u_k} (b_k))$, 
which, by (\ref{estim}) and (\ref{defin i_k}), satisfies 
 \begin{equation}\label{eq:dorila}
    4 \cdot D \bar{h}^{u_k} (\bar{h}^{-u_k} (b_k))
    \geq 4\cdot\varepsilon_k^2 (b_k-a_k)^2\cdot  u_k^2.
    =\frac{u_k^2}{\log u_k}.
 \end{equation}
Finally, since $u_k \geq v_k$ and the function $x\mapsto\frac{x^2}{\log x}$ 
is increasing, from (\ref{eq:unila}) and (\ref{eq:dorila}) we deduce that 
\begin{equation}\label{eq:asin}
 Dh^{v_k}(c_k)  \geq \frac{v_k^2}{\log (v_k)}.
\end{equation}
This shows (\ref{eq:estim-Y}). 

\vspace{0.32cm}

\noindent{\underline{Claim 2:}} For sufficiently large $k$, we have that $Y(c_k) \leq Z(c_k)$.

\vspace{0.2cm}

Observe that (\ref{eq:ineq}) implies that 
\begin{equation}\label{eq:siseusa}
    Y(c_k)\geq \bar{Y}_k(c_k)=\varepsilon_k(a_k-c_k)^2.
\end{equation}
By the definitions of $\varepsilon_k$ and $u_k$, we have that $\varepsilon_k\rightarrow0$ 
and $u_k(a_k-b_k)\rightarrow\infty$ as $k$ goes to infinity. In particular, for large enough $k$, 
   $$ u_k(a_k-b_k)(1+\varepsilon_k)\geq1.$$
By (\ref{eq:flow}), for large enough $k$, this yields
   $$ a_k-c_k=a_k - \bar{h}^{-u_k} (b_k)=-\frac{(b_k-a_k)}{1+\varepsilon_k u_k (b_k-a_k)}\geq \frac{1}{u_k}\geq \frac{1}{\exp(\exp(4\pi k))}.$$
Using  (\ref{eq:siseusa}), the definition of the vector field $Z$ and the fact that $c_k<a_k,$ one finally obtains 
   $$ Y(c_k)
    \leq \frac{\varepsilon_k}{\exp(2\exp(4\pi k))}
    \leq\frac{-1}{\exp(\exp(6\pi k))} 
    =  Z(a_k)
    \leq Z (c_k).$$

\vspace{0.32cm}

\noindent{ \underline{Claim 3:}} If we let $t_k$ be defined so that $f^{t_k} (c_k) = b_k$, then (\ref{eq:reduced}) holds 
for sufficiently large $k$. 

\vspace{0.2cm}

As before, we have that $t_k\leq v_k$ and $t_k\rightarrow\infty$ as $k$ goes to infinity. Using (\ref{eq:asin}) and 
the previous claim, we can estimate the derivative of the flow of $X$ as follows: 
$$         Df^{t_k} (c_k)
         =\frac{X(b_k)}{X(c_k)}\geq \frac{1}{2}\cdot\frac{Y(b_k)}{Y(c_k)}
         =\frac{1}{2}\cdot D h^{v_k} (c_k)
         \geq \frac{v_k^2}{2\cdot\log v_k}\geq \frac{t_k^2}{2\cdot \log t_k}.
$$

%%%%%%%%%%%%%%%%%%%%%%%%%%%%%%%%%%%%%%%%%%%%%%%%%%%%%%%%%%%%%%%%%%%%%%%%%%

\section{Some questions}

It is a consequence of the Main Theorem (actually, of Polterovich-Sodin's result) that the growth of the second 
derivative of a $C^2$ diffeomorphism of a closed interval having only parabolic fixed points is at most polynomial 
of degree 5. Indeed, using the affine derivative $D^2 / D$ and its chain rule 
$$\frac{D^2 (f \circ g)}{D (f \circ g)} = \frac{D^2 f}{Df} \circ g \cdot Dg + \frac{D^2 g}{D g},$$
we obtain 
$$\frac{D^2 f^n}{D f^n} (x) = \sum_{i=0}^{n-1} \frac{D^2f}{Df} (f^i (x)) \cdot Df^i (x). $$
Therefore, if $\| Df^n \|_{\infty} \leq C n^2$ holds for all $n \geq 1$, then 
\begin{equation}
\left\| \frac{D^2 f^n}{D f^n} \right\|_{\infty} 
\leq \left\| \frac{D^2f}{Df} \right\|_{\infty} \cdot \left( 1 + \sum_{i=1}^{n-1}  C i^2 \right) 
\leq  C' n^3 \left\| \frac{D^2f}{Df} \right\|_{\infty},
\label{eq:se-usa}
\end{equation}
hence 
$$\left\| D^2 f^n \right\|_{\infty} \leq C'' n^3 \| Df^n \|_{\infty} \leq C''' n^5.$$

For a $C^3$ diffeomorphisms we can imitate the computations above using the 
Schwarzian derivative $S:= D^3/D - 3/2 \, (D^2/D)^2$ and its chain rule 
$$S(f \circ g) = S(f) \circ g \cdot (Dg)^2 + S(g).$$
Indeed, this gives 
$$S(f^n) = \sum_{i=0}^{n-1} S(f) \circ f^i \cdot (Df^i)^2,$$
hence 
$$\| S (f^n) \|_{\infty} \leq \| S(f) \|_{\infty} \cdot \left( 1 + \sum_{i=1}^{n-1} (Ci^2)^2 \right) \leq \bar{C}' n^5.$$
Using (\ref{eq:se-usa}) and the definition of $S$, this easily implies that 
$$\| D^3 f^n \|_{\infty} \leq \bar{C}'' n^7.$$

We do not know whether the estimates above are sharp (we suspect they are not). 
We state all of this as part of a more general question.

\vspace{0.35cm}

\noindent{\bf Question 1.} Given a $C^{\infty}$ diffeomorphisms of the closed interval having only parabolic fixed points 
and an integer $k \geq 2$, what is the maximal possible growth for $\| D^k f ^n \|_{\infty}$ as $n$ goes to infinite ? 
In which cases it is achieved ?

\vspace{0.35cm}

Note that, by combining Polterovich-Sodin's estimate with Arbogast's  formula\footnote{This is 
mostly known as Fa\`a di Bruno formula; see \cite{Cr05} on this.}, one easily concludes by induction that 
the growth above is at most polynomial. However, it is unclear whether the degree thus detected is optimal.  

It is worth mentioning that looking at norms different from $\| \cdot \|_{\infty}$ is also pertinent. For example, 
a change of variable in the chain rule for the affine derivative immediately yields\footnote{Note that, since $\frac{D^2 f}{D f} = D (\log Df)$, 
we have $\| \frac{D^2f}{Df} \|_{_1} = \mathrm{var} (\log Df)$, hence the triangle inequality (\ref{eq:triangle-ineq}) is nothing but Denjoy's inequality 
\, $\mathrm{var} (\log D (f \circ g)) \leq \mathrm{var} (\log Df) + \mathrm{var} (\log Dg)$.} 
\begin{equation}\label{eq:triangle-ineq}
\left\| \frac{D^2 (f\circ g)}{D (f \circ g)} \right\|_1 \leq \left\| \frac{D^2 f}{D f} \right\|_1 + \left\| \frac{D^2 g}{D g} \right\|_1,
\end{equation}
hence $\|\frac{D^2 f^n}{Df^n}  \|_{_1}$ grows at most linearly. Actually, there is a nice description of those $f$ for which this growth 
is sublinear: they must have only parabolic fixed points and must arise as the time-1 map of a $C^1$ vector field \cite{EN21}.

If we go to the third derivative along these ideas, a good candidate to replace the 
Schwarzian derivative\footnote{The explicit relation comes from the equality 
$Sf (x) = 6 \, \lim_{y \to x} c(f) (x,y)$, which holds for every $C^3$ diffeomorphism $f$.} 
of a diffeomorphism of a closed interval $L$ is the Liouville cocycle, which 
is the function $c(f)$ defined on $L \times L$ as 
$$c(f) (x,y) := \frac{Df(x) \, Df(y)}{(f(x)-f(y))^2} - \frac{1}{(x-y)^2}.$$
Note that this is defined even if $f$ is $C^1$. However, one can ensure that $c(f)$ belongs to $L^1 (I \times I)$ 
only if $f$ is of class $C^{2+\varepsilon}$ for some $\varepsilon > 0$; see \cite{Na06}. This cocycle satisfies the chain rule 
$$c (f \circ g) (x,y) = c(f) (g(x),g(y)) \cdot Dg(x) \, Dg(y) + c(g) (x,y),$$
which after taking $L^1$ norms gives, via a change of variable, 
$$\| c (f \circ g) \|_1 \leq \| c(f) \|_1 + \| c(g) \|_1.$$
As a consequence, $\| c(f^n) \|_1$ grows at most linearly. 
We do not know any characterization of those $f$ for which this grow is sublinear.

%%%%%%%%%%%%%%%%%%%%%%%%%%%%%%%%%%%%%%%%%%%%%%%%%%%%%%%%%%%%%%%%%%%%%%%%%%%%%%%%%%%%%

\vspace{0.4cm}

\noindent{\bf Acknowledgments.} The authors are indebted to H\'el\`ene Eynard-Bontemps for her 
interest on this article and several valuable hints and remarks, as well as to the anonymous referee for 
her~/~his pertinent suggestions and corrections. This work was supported by the ECOS project 23003 
``Small spaces under action''. The authors also acknowledge support of the Institut Henri Poincar\'e 
(UAR 839 CNRS-Sorbonne Universit\'e) and LabEx CARMIN (ANR-10-LABX-59-01); in particular, 
the first-named author was funded by the CIMPA-CARMIN program.

%%%%%%%%%%%%%%%%%%%%%%%%%%%%%%%%%%%%%%%%%%%%%%%%%%%%%%%%%%%%%%%%%%%%%%%%%%%%%%%%%%%%%

\begin{footnotesize}

\vspace{0.1cm}

\noindent Leonardo Dinamarca Opazo \& Andr\'es Navas \\ 

\noindent Universidad de Santiago de Chile\\ 

\noindent Alameda 3363, Santiago, Chile\\ 

\noindent emails: leonardo.dinamarca@usach.cl, andres.navas@usach.cl

\end{footnotesize}


\begin{thebibliography}{Dillo 83}

\bibitem[Bo04]{Bo04} {\sc A. Borichev.} 
Distortion growth for iterations of diffeomorphisms of the interval. 
{\em Geometric and Functional Analysis} {\bf 14} (2004), 941-964.

\bibitem[Cr05]{Cr05}  {\sc A. D. Craik.} 
Prehistory of Fa\`a di Bruno's Formula. 
{\em American Math. Monthly}  {\bf 112}, 217-234.

\bibitem[EN21]{EN21}  {\sc H. Eynard-Bontemps \& A. Navas.}  
Mather invariant, distortion, and conjugates for diffeomorphisms of the interval.
{\em Journal of Functional Analysis} {\bf 281} (2021), 109-149. 

\bibitem[EN24]{EN24}  {\sc H. Eynard-Bontemps \& A. Navas.} 
On residues and conjugacies for germs of 1-D parabolic diffeomorphisms in finite regularity. 
{\em J. of Inst. Math. Jussieu} {\bf 23} (2024), 1821-1855.

\bibitem[Ko68]{Ko68} {\sc N. Kopell.} Commuting diffeomorphisms. In {\em Global Analysis} (Proc. Sympos. Pure
Math., Vol. XIV, Berkeley, Calif., 1968). AMS, Providence, R.I. (1970), 165-184.

\bibitem[Na06]{Na06} {\sc A. Navas.} 
On uniformly quasisymmetric groups of circle diffeomorphisms
{\em Annales Fennici Mathematici} {\bf 31} (2006), 437-462.

\bibitem[Na11]{Na11} {\sc A. Navas.} 
{\em Groups of Circle Diffeomorphisms}. 
Chicago Lect. in Math. (2011).

\bibitem[PS04]{PS04} {\sc L. Polterovich \& M. Sodin.} 
A growth gap for diffeomorphisms of the interval. 
{\em Journal d’Analyse Mathématique} {\bf 92} (2004), 191-209. 

\bibitem[Re13]{Re13} {\sc M. Resman.}  
$\varepsilon$-neighborhoods of orbits and formal classification of parabolic diffeomorphisms. 
{\em Discrete Contin. Dyn. Syst.} {\bf 33} (2013), 3767-3790.

\bibitem[Se77]{Se77} {\sc F. Sergeraert.} 
Feuilletages et diffeomorphismes infiniment tangents \`a l'identit\'e.
{\em Inventiones Mathematicae} {\bf 39} (1977), 253-276.

\bibitem[Sz58]{Sz58} {\sc G. Szekeres.} 
Regular iteration of real and complex functions. 
{\em Acta Math.} {\bf 100} (1958), 203-258.

\bibitem[Ta73]{Ta73} {\sc F. Takens.} 
Normal forms for certain singularities of vector fields. 
{\em Ann. Inst. Fourier} {\bf 23} (1973), 163-195.

\bibitem[Wa04]{Wa04} {\sc N. Watanabe.}    
Growth sequences for flat diffeomorphisms of the interval. 
{\em Nihonkai Math. J.} {\bf 15} (2004), 137-140.

 \bibitem[Yo95]{Yo95} {\sc J.-C. Yoccoz.} 
 Centralisateurs et conjugaison diff\'erentiable des diff\'eomorphismes du cercle. Petits diviseurs en dimension 1. 
 {\em Ast\'erisque} {\bf 231} (1995),  89-242.

 \end{thebibliography}
\end{document}